\documentclass[11pt,a4paper]{article}

\usepackage[utf8]{inputenc}
\usepackage{amsmath}
\usepackage{amssymb}
\usepackage{amsthm}
\usepackage{hyperref}

\usepackage{orcidlink}
\usepackage{authblk}

\renewcommand{\labelenumi}{\theenumi}
\renewcommand{\theenumi}{{\it(\roman{enumi})}}
\newcommand*\fixitem {\item[]%
  \refstepcounter{enumi}\hskip-\leftmargin\labelenumi\hskip\labelsep}

\newcommand{\T}{\mathbb{T}}
\newcommand{\Z}{\mathbb{Z}}
\newcommand{\C}{\mathbb{C}}
\newcommand{\N}{\mathbb{N}}

\def\zdSet{\mathcal{Z}}
\def\zhdSet{\zdSet_h}

\def\sub{\subseteq}

\def\into{\longrightarrow}
\def\setdiff{\backslash}

\def\mlim{{\displaystyle\lim_{m\rightarrow\infty}}}
\def\kliminf{{\displaystyle\liminf_{k\rightarrow\infty}}}

\def\nulimsup{{\displaystyle\limsup_{\nu\rightarrow\infty}}}

 \newtheorem{theorem}{Theorem}[section]
 \newtheorem{lemma}[theorem]{Lemma}
  \newtheorem{proposition}[theorem]{Proposition}

\title{Zero Harmonic Density implies Zero Density}
\author{Rafael Reno S. Cantuba\thanks{Associate Professor, Department of Mathematics and Statistics, De La Salle University (DLSU), Taft Ave., Manila, Philippines, supported by a grant from the Research Grants Management Office of DLSU, grant no.: 02FR1TAY24-1TAY25, email: rafael.cantuba@dlsu.edu.ph}\\ ORCID: 0000-0002-4685-8761\orcidlink{0000-0002-4685-8761}}
\date{}

\begin{document}

\maketitle

\begin{abstract}
A set of integers has zero harmonic density if its elements, viewed as frequencies of continuous characters of the unit circle as a compact group, are measure-theoretically sparse enough that the values of Fourier transforms of Radon measures supported on small open arcs can interpolate those of any global Radon measure. In contrast, zero density quantifies the proportional size of a set of integers relative to expanding intervals on the real line. Although these notions arise from abstract harmonic analysis and elementary number theory, respectively, zero harmonic density is proven as a sufficient condition for zero density, resolving what has been stated in the literature as an open problem concerning the relation between these two notions. The proof makes use of classical gap theorems from complex analysis.
\end{abstract}

\vspace{0.5em}
\noindent
\begin{quote}
    \small
    \textbf{Keywords:} zero harmonic density, interpolation set, zero density, natural boundary, Hadamard gap theorem, Fabry gap theorem, converse gap theorem
    
    \vspace{0.5em}
  \textbf{2020 Mathematics Subject Classification:} 30B10, 43A46 (primary); 30B30, 43A25, 11B05 (secondary)
\end{quote}

\section{Introduction}

One facet of the study of boundary behavior for complex power series is rooted in the interplay between the gaps among the integer exponents and the impossibility of analytic continuation across the circle of convergence. The former is often called ``lacunarity'' from the Latin \emph{lacuna} meaning gap.  Historically, this study began with classical examples, such as Weierstrass's continuous nowhere-differentiable series and Fredholm's example of a series with smooth boundary values that nevertheless possesses the unit circle as a natural boundary. See, for instance, \cite[Section~6.9]{ros02}. These early observations culminated in the so-called ``gap theorems'' of complex analysis, most prominently the Hadamard Gap Theorem\textemdash which establishes a natural boundary for power series whose exponent sequences grow at a multiplicative rate\textemdash and the Fabry Gap Theorem, which significantly relaxes this condition to require only that the sequence of exponents has zero density. The Fabry condition represents the sharpest possible purely exponent-based  criterion for a natural boundary, as evidenced by classical converse results due to Pólya \cite{pol42} and Erdős \cite{erd45}. These function-theoretic developments were shown to be related to the measure-theoretic notion, in abstract harmonic analysis, of thin sets, which are also widely known as interpolation sets\footnote{We are mainly using \cite{gra13} for it is a modern and comprehensive reference, but such sets appeared in earlier reference texts \cite{hew94,hew70,lop75}. Since \cite{hew94,hew70} are voluminous, we specify here: \cite[Section~26]{hew94} and \cite[Sections~37,41]{hew70}.} \cite[pp.~xi--xiii]{gra13}. These sets—including Sidon sets, $I_0$ sets, and sets of zero harmonic density—are characterized by their ability to interpolate sequences or Fourier transforms using localized Radon measures on compact abelian groups. The concept of zero harmonic density, introduced by Déchamps-Gondim\footnote{We refer the reader to the comprehensive review in \cite[Section~10.5]{gra13}.}, isolates sets of frequencies $E \subseteq \mathbb{Z}$ whose restricted Fourier transforms can mimic any global Radon measure via measures supported on arbitrarily small open subsets of the unit circle $\T$. This strong measure-theoretic sparseness was used in \cite[Chapter~10]{gra13} to prove a generalized Hadamard-type gap theorem: power series whose exponents form a set of zero harmonic density cannot be analytically continued across their circle of convergence.

Despite the seemingly central role played by zero harmonic density in generalizing classical gap theorems, its exact relation to standard asymptotic density is said to be unresolved. While it is known that zero density does not imply zero harmonic density\textemdash as illustrated by subsets of $\mathbb{Z}$ containing arbitrarily long arithmetic progressions of fixed step length\textemdash the converse direction was presented as an open question \cite[p.~186]{gra13}. Specifically, we find in \cite[p.~249]{gra13} the explicitly posed problem of determining when a set of zero density has zero harmonic density.

In this paper, we resolve this question by establishing that the class of sets of integers with zero harmonic density is strictly contained within the class of those with zero density. Our proof combines the generalized gap theorem \cite[Proposition~10.4.7]{gra13}, the converse Fabry gap theorem of Pólya and Erdős, and some Laurent series decomposition to reach a contradiction whenever a set of zero harmonic density fails to have zero density.

\section{Preliminaries}\label{PrelimSec}

We denote by $\T$ the compact group of all complex numbers with modulus one. The set of all Radon measures concentrated on $U\sub \T$ (meaning Radon measures $\nu$ such that $\nu(\T\setdiff U)=0$) shall be denoted by $M(U)$. By Pontryagin duality in the theory of topological groups from abstract harmonic analysis, the dual group of $\T$ is the discrete group $\Z$ of all the integers. In particular, under some topological group isomorphism, any integer $n$ may be identified with a character of $\T$, which is the function $e^{it}\mapsto e^{int}$. The \emph{Fourier-Stieltjes transform} of $\nu\in M(\T)$ is the function $\widehat{\nu}:\Z\into\C$ given by $\widehat{\nu}(\gamma):=\int_\T\overline{\gamma(t)}\,d\nu(t)$, where the mapping $\C\into\C$ denoted by $z\mapsto\overline{z}$ means complex conjugation. We shall also use the notation $\N:=\Z\cap(0,\infty)$, and throughout this paper, for any non-empty $S \subseteq \mathbb{Z}$, expressions of the form $\sum_{\gamma \in S} a_\gamma z^\gamma$ are understood to be summed according to the unique strictly increasing ordering of $S$.

A subset $E$ of $\Z$ is said to have \emph{zero harmonic density} if for each open subset $U$ of $\T$ and each measure $\varphi\in M(\T)$, there exists a measure $\nu\in M(U)$ such that $\gamma\in E$ implies $\widehat{\nu}(\gamma)=\widehat{\varphi}(\gamma)$. A set $E\sub\Z$ has \emph{zero density} if ${\displaystyle\limsup_{n\rightarrow\infty}}\frac{|E\cap[-n,n]|}{2n+1}=0$. Also, for every $E\sub\Z$, we define $E^+:=E\cap(0,\infty)$ and $E^-:=\{-n\  :\  n\in E\cap(-\infty,0]\}$.

\begin{proposition}\label{subProp}\begin{enumerate}\fixitem\label{zhdSub} If $E$ has zero harmonic density, then so does any $F\sub E$.
\item\label{zdSub} If $E$ does not have zero density, then either $E^+$ or $E^-$ does not have zero density. In such a case, if the strictly increasing sequence $(n_k)$ is an enumeration of whichever of $E^+$ or $E^-$ does not have zero density, then $\kliminf\frac{n_k}{k}<\infty$. 
\end{enumerate}
\end{proposition}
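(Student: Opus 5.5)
Part \ref{zhdSub} follows directly from the definition, because the defining condition for zero harmonic density only requires interpolation at the points of the set. Let $F\sub E$, let $U\sub\T$ be open, and let $\varphi\in M(\T)$. Since $E$ has zero harmonic density, there is some $\nu\in M(U)$ with $\widehat{\nu}(\gamma)=\widehat{\varphi}(\gamma)$ for every $\gamma\in E$. In particular this equality holds for every $\gamma\in F$. The same $\nu$ therefore witnesses the condition for $F$, and we are done.

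For part \ref{zdSub}, I would argue by contrapositive and show that if both $E^+$ and $E^-$ have zero density, then so does $E$. Each $m\in E\cap[-n,n]$ is of one of two kinds:
\begin{itemize}
\item if $m>0$, then $m\in E^+\cap[-n,n]$;
\item if $m\le 0$, then $-m\in E^-\cap[-n,n]$.
\end{itemize}
The map $m\mapsto -m$ is injective, so
\[
|E\cap[-n,n]|\le |E^+\cap[-n,n]|+|E^-\cap[-n,n]|.
\]
Dividing by $2n+1$ and taking $\limsup$, subadditivity of $\limsup$ gives that $E$ has zero density. Hence if $E$ does not have zero density, one of $E^+$, $E^-$ fails to have zero density.

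For the second claim, let $S$ be whichever of $E^+$, $E^-$ fails to have zero density, and let $(n_k)$ be its strictly increasing enumeration. If $S$ were finite, the density quotients would tend to $0$, so $S$ is infinite. Since $S\sub[0,\infty)$, we have $|S\cap[-n,n]|=|S\cap[0,n]|$. The failure of zero density gives some $\delta>0$ and infinitely many $n$ with $|S\cap[0,n]|\ge \delta(2n+1)$. Fix such an $n$ large and put $k:=|S\cap[0,n]|$, which is at least $1$ and tends to infinity along these $n$. The $k$ smallest elements of $S$ all lie in $[0,n]$, so $n_k\le n$. Therefore
\[
\frac{n_k}{k}\le \frac{n}{\delta(2n+1)}<\frac{1}{2\delta}.
\]
This bound holds for infinitely many distinct $k$, so $\kliminf\frac{n_k}{k}\le\frac1{2\delta}<\infty$.

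The only point needing care is indexing: $E^-$ may contain $0$, so the enumeration may start at $n_1=0$. Since we only use $n_k\le n$ for $k=|S\cap[0,n]|$, this does not affect the argument. There is no real obstacle here; the main thing is to keep the counting correct, since $|S\cap[0,n]|$ counts the first $k$ terms of the enumeration.
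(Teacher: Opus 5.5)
Your proof is correct and follows essentially the same route as the paper. Part (i) is read off directly from the definition, and in part (ii) you use the same inequality $|E\cap[-n,n]|\le|E^+\cap[-n,n]|+|E^-\cap[-n,n]|$ with subadditivity of $\limsup$, and the same observation that $n_k\le n$ for $k=|S\cap[0,n]|$; with $\delta=L/2$, where $L$ is the $\limsup$ of the density quotients of $S$, your bound $\frac{1}{2\delta}$ coincides exactly with the paper's bound $\frac{1}{L}$.
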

\begin{proof}[Proof (Sketch)] 
The statement \ref{zhdSub} follows immediately from the definition of zero harmonic density. To prove \ref{zdSub}, we use the fact that \[|E \cap [-n, n]| \le |E^+ \cap [-n, n]| + |E^- \cap [-n, n]|,\] which routinely implies
\[
\limsup_{n\to\infty} \frac{|E \cap [-n, n]|}{2n+1} \le \limsup_{n\to\infty} \frac{|E^+ \cap [-n, n]|}{2n+1} + \limsup_{n\to\infty} \frac{|E^- \cap [-n, n]|}{2n+1},
\]
the left-hand side of which is positive when $E$ does not have zero density, and this forces one of the nonnegative quantities $\limsup_{n\to\infty} \frac{|E^+ \cap [-n, n]|}{2n+1}$ and $\limsup_{n\to\infty} \frac{|E^- \cap [-n, n]|}{2n+1}$ to be also positive. Thus, $E^+$ or $E^-$ does not have zero density. 

Suppose that the strictly increasing sequence $(n_k)$ is an enumeration of $S\in\{E^+,E^-\}$ with $L:=\nulimsup s_\nu>0$, where $s_\nu:=\frac{|S\cap[-\nu,\nu]|}{2\nu+1}$ for all $\nu\in\N$. Since $S\sub[0,\infty)$, we have $S\cap[-\nu,\nu]=S\cap[0,\nu]$, so $|S\cap[-\nu,\nu]|$ cannot exceed the $\nu+1$ integers in the interval $[0,\nu]$. By a routine argument, $0\leq s_\nu\leq 1$ for all $\nu\in\N$. This means that we further have $0<L<\infty$. Consequently, $\frac{1}{L}<\infty$, which we will need later.

 By routine use of the notion of suprema, a strictly increasing sequence $(\nu_m)$ in $\N$ may be produced by induction such that $m\in\N$ implies $s_{\nu_m}>\frac{L}{2}$, which, by the definition of $s_\nu$, further implies $K_m:=|S\cap[-\nu_m,\nu_m]|>L\nu_m+\frac{L}{2}>L\nu_m$. That is, $S$ has $K_m$ elements in $[-\nu_m,\nu_m]$, and since $S\sub[0,\infty)$, we further deduce that $S$ has $K_m$ elements in $[0,\nu_m]$. Since $(n_k)$ is a strictly increasing enumeration of $S$, the $K_m$th element of $S$, which is $n_{K_m}$ must be in $[0,\nu_m]$. Consequently, $n_{K_m}\leq \nu_m$ for all $m\in\N$. By routine manipulations, we have, at this point, $\frac{n_{K_m}}{K_m}\leq \frac{\nu_m}{K_m}<\frac{1}{L}$ for all $m\in\N$. Because $(\nu_m)$ is a strictly increasing sequence in $\N$, and because $K_m>L\nu_m$ (with $L>0$) for all $m$, we find that $\mlim K_m=\infty$, so for each $K\in\N$, we have $K_m\geq K$ for sufficiently large $m$, and for such indices $m$, we also have $\frac{n_{K_m}}{K_m}\in\left\{\frac{n_k}{k}\,:\,k\geq K\right\}$, so $\inf_{k\geq K}\frac{n_k}{k}\leq \frac{n_{K_m}}{K_m}<\frac{1}{L}$. That is, $\frac{1}{L}$ is an upper bound of $\left\{\inf_{k\geq K}\frac{n_k}{k}\,:\,K\in\N\right\}$, and so, \[\kliminf\frac{n_k}{k}=\sup_{K\in\N}\inf_{k\geq K}\frac{n_k}{k}\leq \frac{1}{L}<\infty.\qedhere\]
\end{proof}

\section{Gap Theorems}

While much of the classical theory of complex functions focuses on methods and conditions for analytic continuation, another aspect of complex function theory is concerned with what might be termed the opposite phenomenon: functions that possess no analytic continuation beyond the domain of validity of their initial representation. This boundary behavior is systematically characterized by gap theorems, which reveal a significant connection between the sparsity of a sequence of exponents and the emergence of a natural boundary. In this paper, we shall need the following gap theorems.

\begin{proposition}\begin{enumerate}\label{refProp}\fixitem\label{GrahamHare} \cite[Proposition~10.4.7]{gra13} If $(n_k)$ is a strictly increasing sequence of positive integers such that $\{n_k\  :\  k\in\N\}$ has zero harmonic density, then for any sequence $(a_k)$ of complex numbers, the function $f(z)=\sum_{k=1}^\infty a_kz^{n_k}$ cannot be analytically continued across the circle of convergence.
\item\label{Polya} \cite[p.~235]{seg08} If $(n_k)$ is a strictly increasing sequence of nonnegative integers such that $\kliminf\frac{n_k}{k}<\infty$, then there exists a sequence $(a_k)$ of complex numbers such that $f(z)=\sum_{k=1}^\infty a_kz^{n_k}$ has radius of convergence $1$ and may be analytically continued across $\T$.
\end{enumerate}
\end{proposition}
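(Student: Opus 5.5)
Both parts of Proposition~\ref{refProp} are quoted from the literature. I would verify \ref{GrahamHare} directly from the definition of zero harmonic density, and for \ref{Polya} I would follow the classical Pólya construction. Throughout I normalize so that the radius of convergence is $1$. This loses nothing in \ref{GrahamHare}, since rescaling $z$ does not change the exponents; in \ref{Polya} the normalization is part of what is asserted.

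For \ref{GrahamHare}, I argue by contradiction. Suppose $f(z)=\sum_k a_kz^{n_k}$ continues analytically across an open arc $J\sub\T$. Then there are $R>1$, a smaller open arc $U$ with $\overline{U}\sub J$, and $\delta>0$ such that $f$ extends holomorphically to a neighbourhood of the closed set
\[
\overline{D}_U:=\{\rho t\ :\ 0\le\rho\le R,\ t\in\overline{U}\}\cup\{|z|\le 1\}
\]
enlarged by a sector of half-angle $\delta$. Fix an arbitrary $w\in\T$ and apply the definition with $\varphi=\delta_{\overline{w}}$, so that $\widehat{\varphi}(n)=w^{n}$. Zero harmonic density gives $\nu_w\in M(U)$ with $\widehat{\nu_w}(n_k)=w^{n_k}$ for all $k$. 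Since $\nu_w$ is a finite measure and $\sum_k|a_k||z|^{n_k}$ converges uniformly for $|z|\le r<1$, termwise integration gives
\[
f(zw)=\sum_k a_k w^{n_k}z^{n_k}=\int_\T f(z\overline{t})\,d\nu_w(t)\qquad(|z|<1),
\]
after adjusting the conjugation convention so that the points $z\overline{t}$ have arguments in $U$.

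The right-hand side is holomorphic in $z$ on the region $\{|z|<R,\ |\arg z|<\delta\}\cup\{|z|<1\}$. This follows from dominated convergence and Morera's theorem, because $f$ is bounded on compact subsets of the extension domain and $\nu_w$ has finite total variation. Hence $z\mapsto f(zw)$ continues analytically across the point $1$, that is, $f$ continues across $w$. Since $w$ was arbitrary, $f$ continues across all of $\T$. But a power series always has a singularity on its circle of convergence, which is the contradiction. The delicate point is arranging the geometry: choose $U$ and $\delta$ so that $z\overline{t}$ stays inside the extension domain for all $t\in U$ and all admissible $z$.

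For \ref{Polya}, I follow Pólya's construction through entire functions of exponential type. Set $S=\{n_k\}$. The hypothesis $\kliminf n_k/k<\infty$ says that $S$ has positive upper density, say at least $D>0$. The plan is to build an entire function $F$ with three properties: it has exponential type, its indicator satisfies $h_F(\pm\pi/2)<\pi$, and it vanishes at every nonnegative integer outside $S$. Then I take $a_k:=F(n_k)$, so that $f(z)=\sum_k F(n_k)z^{n_k}=\sum_{n\ge0}F(n)z^n$. By the Pólya--Borel theory (the Laplace/Borel transform of $F$), the singularities of $f$ lie only on the arc of $\T$ cut out by the conjugate indicator diagram, which has length less than $2\pi$. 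So $f$ continues across the complementary arc. The radius of convergence is exactly $1$ as long as $F$ is not too small along $S$, and this can be arranged by a normalizing perturbation.

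The main obstacle is constructing $F$. I would first try a Weierstrass-type product over the integers not in $S$, restricted to long blocks $[N_m,2N_m]$ in which $S$ has at least $DN_m$ elements; such blocks exist by the argument in Proposition~\ref{subProp}\ref{zdSub}. Within these blocks the zero set has density at most $1-D$, so I would try to show that the product has type at most $\pi(1-D)<\pi$ along the imaginary axis. Controlling the growth of this lacunary-zero product uniformly is where the real work lies, and it is the step I expect to be hardest; Erdős's refinement of Pólya's argument handles exactly this estimate.
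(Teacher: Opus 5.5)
Your part (i) is sound. It is essentially the rotation argument the paper attributes to Graham--Hare: interpolating the point mass at $\overline{w}$ gives $\nu_w\in M(U)$ with $\widehat{\nu_w}(n_k)=w^{n_k}$, and $f(zw)=\int f(z\overline{t})\,d\nu_w(t)$ carries regularity from one arc to every point of $\T$.

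The gap is in part (ii), and it is more than the hard estimate you flag. The function $F$ you want (nonzero, of exponential type, $h_F(\pm\pi/2)<\pi$, vanishing at every nonnegative integer outside $S$) need not exist when only $\liminf_{k\to\infty} n_k/k<\infty$ is assumed.

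To see this, note that for such $F$ the Borel transform $B$ is holomorphic off a compact convex set $K$ (the conjugate indicator diagram) lying in the strip $|\mathrm{Im}\,w|<\pi$. The representation
\[
f(z)=\sum_{n\ge 0}F(n)z^n=\frac{1}{2\pi i}\oint_C\frac{B(w)}{1-ze^{w}}\,dw
\]
then shows that $f$ is holomorphic on $(\C\cup\{\infty\})\setminus e^{-K}$. In particular $f$ is holomorphic on every circle $|z|=R$ with $R$ large, so $f$ continues all the way around the disk.

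Now take $S=\bigcup_m([N_m,2N_m]\cap\Z)$ with $N_{m+1}/N_m\to\infty$. At $n_k=2N_m$ one has $n_k/k<2$, so the hypothesis holds. After rescaling to radius $1$, $f$ has Ostrowski gaps $(2N_m,N_{m+1})$ whose ratio tends to $\infty$.

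Let $G$ be a connected domain, relatively compact in the region of holomorphy of $f$, having $|z|=r$ as a boundary component and containing $|z|=R$. On $|z|=r$ one has $\log|f-s_{2N_m}|\le -cN_{m+1}+O(1)$ for some $c>0$. On the rest of $\partial G$ one has $\log|f-s_{2N_m}|\le O(N_m)$. The two-constants theorem then gives $s_{2N_m}\to f$ uniformly on $|z|=R$, hence on $|z|\le R$. So the radius of convergence is at least $R>1$, a contradiction. An infinite radius would make $f$ entire and zero at $\infty$; then $F$ vanishes on all nonnegative integers, and $F\equiv 0$ by Carlson's theorem.

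So no Weierstrass product, and no other $F$, can work for this $S$. Your heuristic that the zeros have density at most $1-D$ holds only inside the blocks. On the long stretches between blocks $F$ must vanish at every integer, and the indicator, being a $\limsup$, sees those scales. The single-entire-function scheme needs something like positive lower density of $S$, which the hypothesis does not give.

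What the statement needs is a construction that gives continuation across a short arc only, never around $\infty$. That is Erdős's argument, which the paper's source (Segal) follows. It is not a refinement of your growth estimate, since no entire function appears at all. One takes sparse blocks in which $S$ has at least $DN_m$ elements. In each block one solves a homogeneous linear system with more unknown coefficients than constraints, so that the re-expansion about a point near the chosen boundary point has its dangerous Taylor coefficients annihilated. One then normalizes so that the radius of convergence is exactly $1$, and sums the blocks.

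Since the paper itself only invokes (ii) as a cited existence result, quoting it is legitimate. The route you outline, however, cannot be completed.
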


The proof of Proposition~\ref{refProp}\ref{GrahamHare} in \cite[p.~185]{gra13} relies on a localized rotation technique that leverages the measure-theoretic interpolation, that results from zero harmonic density, to propagate analyticity along the boundary of the disk of convergence of the power series. In particular, the Fourier coefficients of an interpolating measure were made to match the phase shifts of some rotation precisely along the exponent sequence. An auxiliary function, which is an integral of $f(z)$, was used to produce an analytic continuation of $f$ at some element of the boundary of the disk of convergence, and this was propagated by rotation around the unit circle, accomplishing a proof by contradiction. The literature on gap theorems in complex analysis is clear on what is the weakest condition for a natural boundary, which is in the statement of the Fabry Gap Theorem. The sufficient condition in this theorem is precisely that of the exponents in the power series having zero density, and this brings us to the other statement in Proposition~\ref{refProp}.

Although the exact statement of Proposition~\ref{refProp}\ref{Polya} above is something we took from \cite[p.~235]{seg08}, the theorem is actually a classical result by  P\'{o}lya \cite{pol42}. P\'{o}lya’s proof of the converse Fabry Gap Theorem relies on complex function theory, specifically utilizing the Borel transform and Phragm\'{e}n–Lindel\"{o}f indicator diagrams. P\'{o}lya constructed the power series coefficients $a_k$ by linking them to the Taylor coefficients of an auxiliary entire function of exponential type. By carefully selecting certain blocks of indices, that exhibit lacunarity, from the exponent sequence where the density is bounded, the coefficients were selected to correspond to an indicator diagram, a notion that P\'{o}lya developed in \cite{pol42}, the conjugate domain of which does not completely cover the unit circle. This analytical framework implicitly establishes that the resulting series possesses regular boundary points, and hence there is no more natural boundary for this case.

In contrast, Erd\H{o}s introduced a purely elementary, constructive approach that avoids the machinery of complex function theory. The coefficients $a_k$ were obtained by isolating specific blocks of intervals where the localized density of the exponent sequence remains bounded. Within these blocks, he framed the problem of analytic continuation as a system of homogeneous linear equations designed to force the coefficients of a shifted series, $f(z + \lambda)$, to vanish at targeted higher-order terms. By utilizing a counting argument to prove the number of available coefficients (as the variables of the system) strictly exceeds the number of vanishing constraints, Erd\H{o}s guaranteed a non-trivial solution. The coefficients $a_k$ are explicitly bounded to normalize the radius of convergence to exactly one while guaranteeing regularity at a boundary point via a direct translation argument. This was the approach used in the exposition in \cite[pp.~235--239]{seg08}.

Since the construction of the coefficients $a_k$ in Proposition~\ref{refProp}\ref{Polya} is based on at least two classical approaches that we have just cited, we refer the reader to the aforementioned works for the actual appearance of the $a_k$, and for this paper, invoking Proposition~\ref{refProp}\ref{Polya} as an existence result shall suffice. Towards achieving our main result, we now generalize Proposition~\ref{refProp}\ref{GrahamHare} into the following.

\begin{lemma}\label{zhdLem} 
If $E \sub \Z$ has zero harmonic density, then any Laurent series $f(z) = \sum_{\gamma \in E} a_\gamma z^{\gamma}$ cannot be analytically continued beyond or within its open annulus of convergence $A(r, R) := \{z \in \C : r < |z| < R\}$.
\end{lemma}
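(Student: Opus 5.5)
We will split the Laurent series $f(z)=\sum_{\gamma\in E}a_\gamma z^\gamma$ into its principal and analytic parts and apply Proposition~\ref{refProp}\ref{GrahamHare} to each. Write
\[
f(z)=a_0+g(z)+h(1/z),\qquad g(w):=\sum_{n\in E^+}a_nw^n,\qquad h(w):=\sum_{n\in E^-\setdiff\{0\}}a_{-n}w^n ,
\]
with $a_0:=0$ if $0\notin E$. The series $g$ has radius of convergence $R$, and $h$ has radius of convergence $1/r$; we allow $r=0$ or $R=\infty$, and a series with finitely many terms is treated trivially. By Proposition~\ref{subProp}\ref{zhdSub}, $E^+\sub E$ has zero harmonic density. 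Zero harmonic density is also invariant under the reflection $n\mapsto -n$: given $\varphi\in M(\T)$ and an open $U\sub\T$, we apply the definition to the pushforward of $\varphi$ under $z\mapsto\overline z$ and the open set $\overline U$, and then push the resulting measure back. The transforms transform as $\widehat{\nu^*}(\gamma)=\widehat{\nu}(-\gamma)$, so $E^-\setdiff\{0\}$ also has zero harmonic density. Proposition~\ref{refProp}\ref{GrahamHare}, applied after rescaling $w\mapsto Rw$ (or $w\mapsto w/r$), then shows that $g$ cannot be continued across $|w|=R$ whenever $R<\infty$ and $g$ is an infinite series. Likewise $h$ cannot be continued across $|w|=1/r$ whenever $r>0$ and $h$ is an infinite series.

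Now suppose that $f$ continues analytically across some point $z_0$ with $|z_0|=R$. Near $z_0$ the function $h(1/z)$ is analytic, since it converges on $|z|>r$ and $R>r$; the constant $a_0$ is entire. Hence $g=f-a_0-h(1/\cdot)$ continues across $z_0$. This is impossible if $g$ is an infinite series. If $g$ is a polynomial, then $R=\infty$ and the outer circle does not exist. Symmetrically, a continuation of $f$ across a point $z_0$ with $|z_0|=r$ gives a continuation of $h$ across $1/z_0$, because $g$ is analytic on $|z|<R$ and $r<R$. So $f$ cannot be continued beyond either boundary circle of $A(r,R)$.

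For the phrase ``within'' its annulus, we read the claim as saying that the boundary circles are natural boundaries: $f$ admits no analytic extension to any neighborhood of a boundary point, whether one approaches from inside or outside. This is exactly what the argument above gives, since a continuation across an arc of $|z|=R$ yields a continuation of $g$ across the same arc. The main technical point is the reflection invariance of zero harmonic density, together with the rescaling that puts the circle of convergence in the form used by Proposition~\ref{refProp}\ref{GrahamHare}. That proposition is stated for the circle of convergence of a general radius, so the rescaling is only a formality. The reflection check uses nothing beyond the definition and the change of variables for $\widehat{\nu}$.
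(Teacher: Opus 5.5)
Your proof is correct and follows the paper's argument. Both split $f$ into the parts over $E^+$ and $E^-$, apply Proposition~\ref{refProp}\ref{GrahamHare} to each, and use $r<R$ to turn a continuation of $f$ at a boundary point into a continuation of the corresponding part. Your version is more careful than the paper's in three places: the paper attributes the zero harmonic density of $E^-$ to Proposition~\ref{subProp}\ref{zhdSub} even though $E^-$ need not be a subset of $E$, and your reflection-invariance check supplies exactly that missing step; you split off the constant term $a_0$; and you spell out the subtraction argument that the paper compresses into ``take the intersection.''
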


\begin{proof}
We decompose $f(z)$ as $f(z) = P(z) + N(z)$, where
$P(z) = \sum_{\gamma \in E^+} a_\gamma z^\gamma$ and $N(z) = \sum_{\gamma \in E^-} a_{-\gamma} z^{-\gamma}$. Let $R$ be the radius of convergence of $P(z)$, and let $R_N$ be the radius of convergence of $Q(\zeta) := \sum_{\gamma \in E^-} a_{-\gamma} \zeta^\gamma$. Defining $r := 1/R_N$, the series $N(z) = Q(1/z)$ converges whenever $|z| > r$. 

If $r \ge R$, then the open annulus of convergence $A(r,R)$ is empty, so no analytic continuation exists. We assume henceforth that $r < R$, so that $f(z)$ converges holomorphically on $A(r, R)$. 

By Proposition~\ref{subProp}\ref{zhdSub}, both $E^+$ and $E^-$ have zero harmonic density. Applying Proposition~\ref{refProp}\ref{GrahamHare} to $P(z)$, the circle $|z| = R$ is a natural boundary for $P(z)$. Similarly, applying Proposition~\ref{refProp}\ref{GrahamHare} to $Q(\zeta)$, the circle $|\zeta| = R_N$ is a natural boundary for $Q(\zeta)$, which implies that the circle $|z| = r$ is a natural boundary for $N(z) = Q(1/z)$. For the sum $P(z)+N(z)$, we simply take the intersection of the region $\{z\in\C\,:\,|z|<R\}$ for $P(z)$ and $\{z\in\C\,:\,|z|>r\}$ for $N(z)$. Consequently, $f(z) = P(z) + N(z)$ cannot be analytically continued across any point of the boundary circles $|z| = R$ or $|z| = r$.
\end{proof}

\section{Main result}

Sets of integers that have zero harmonic density are characterized by not containing arbitrarily long arithmetic progressions of fixed step length \cite[Proposition~10.2.8]{gra13}. Thus, the set $\{100^{j!}+k\,:\,k\leq j\}$, which may be routinely proven to have zero density, and which contains arbitrarily long arithmetic progressions of fixed step length, does not have zero harmonic density \cite[p.~186]{gra13}. Hence is the question of the connection between zero harmonic density and zero density for subsets of $\Z$, or alternatively, as phrased as an open problem in \cite[p.~249]{gra13}, ``When does a set (of integers) of zero density have zero harmonic density?'' Perhaps this question was phrased under the thought process that if $\zdSet$ is the collection of all subsets of $\Z$ that have zero density, and $\zhdSet$ as the collection of those with zero harmonic density, then the question is under what conditions will a set from $\zdSet$ be in $\zhdSet$. Our answer to the open problem is that $\zhdSet\setminus\zdSet=\emptyset$.

\begin{theorem}\label{TheThm} A set of integers that has zero harmonic density has zero density.
\end{theorem}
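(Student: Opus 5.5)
We argue by contradiction. Suppose $E\sub\Z$ has zero harmonic density but does not have zero density. By Proposition~\ref{subProp}\ref{zdSub}, one of $E^+$ or $E^-$, call it $S$, fails to have zero density, and if $(n_k)$ is its strictly increasing enumeration then $\kliminf\frac{n_k}{k}<\infty$. Before going further we note a normalisation. The set $E^-$ may contain $0$ (since it collects $-n$ for $n\in E\cap(-\infty,0]$), while Proposition~\ref{refProp}\ref{GrahamHare} wants positive exponents. We therefore replace $S$ by $S\setminus\{0\}$. Removing one element changes neither the failure of zero density nor the finiteness of $\liminf n_k/k$ (the indices shift by at most one). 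So we may assume $S\sub\N$.

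Next we transfer zero harmonic density from $E$ to $S$. If $S=E^+$, this is immediate from Proposition~\ref{subProp}\ref{zhdSub}. If $S=E^-$, we use the fact that $-E$ has zero harmonic density whenever $E$ does. To see this, fix an open $U\sub\T$ and $\varphi\in M(\T)$. Push everything forward under the inversion $t\mapsto\overline{t}$: apply the definition for $E$ to the open set $\overline{U}$ and the reflected measure $\tilde\varphi$. This gives $\nu\in M(\overline{U})$ interpolating $\tilde\varphi$ on $E$. Its reflection $\tilde\nu\in M(U)$ then satisfies $\widehat{\tilde\nu}(-\gamma)=\widehat\nu(\gamma)=\widehat{\tilde\varphi}(\gamma)=\widehat\varphi(-\gamma)$ for $\gamma\in E$. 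Hence $-E$, and then its subset $E^-$, has zero harmonic density by Proposition~\ref{subProp}\ref{zhdSub}. In either case $S$ is a set of positive integers with zero harmonic density.

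Now we play the two gap theorems against each other. Since $\kliminf\frac{n_k}{k}<\infty$, Proposition~\ref{refProp}\ref{Polya} gives coefficients $(a_k)$ such that $f(z)=\sum_k a_kz^{n_k}$ has radius of convergence $1$ and continues analytically across $\T$, at least near some point of $\T$. On the other hand, Proposition~\ref{refProp}\ref{GrahamHare} (equivalently Lemma~\ref{zhdLem} with $E$ replaced by $S$) says that no power series with exponents in the zero-harmonic-density set $S$ can be analytically continued across its circle of convergence. This contradiction shows that $E$ has zero density.

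I expect the main obstacle to be making the two conclusions match precisely. First, ``continued across $\T$'' in Proposition~\ref{refProp}\ref{Polya} means continuation across some arc, or at one regular point. ``Cannot be continued across the circle'' in Proposition~\ref{refProp}\ref{GrahamHare} must mean that every point is singular, i.e.\ the circle is a natural boundary. This is how \cite{gra13} proves it: regularity at one point is propagated by rotation. So I would check that the cited statement is indeed the natural-boundary form. Second, the $0$-exponent and negative-side reflection issues need the care outlined above. The rest is bookkeeping.
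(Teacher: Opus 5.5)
Your proof is correct and follows the paper's argument. You use Proposition~\ref{subProp}\ref{zdSub} to isolate $S\in\{E^+,E^-\}$ with $\liminf n_k/k<\infty$, then P\'olya's converse (Proposition~\ref{refProp}\ref{Polya}) to get a power series on $S$ that continues across some point of $\T$, then the Graham--Hare gap theorem (Proposition~\ref{refProp}\ref{GrahamHare}) for the contradiction. You read that theorem as a natural-boundary statement, which is how the paper reads it in Lemma~\ref{zhdLem}.

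There are two differences. First, you apply the gap theorem directly to $S$ instead of building a Laurent series over all of $E$ and invoking Lemma~\ref{zhdLem}; that lemma only reduces back to the same step, so this is a streamlining. Second, you justify two points the paper passes over. $E^-$ is a subset of $-E$, not of $E$, so it inherits zero harmonic density through your reflection argument rather than Proposition~\ref{subProp}\ref{zhdSub} alone. A possible exponent $0\in E^-$ can be discarded. Both justifications are correct.
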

\begin{proof} Tending towards a contradiction, suppose there exists a set $E$ of integers with zero harmonic density that does not have zero density. By Proposition~\ref{subProp}\ref{zdSub}, there exists $S\in\{E^+,E^-\}$ such that $S$ does not have zero density. By Proposition~\ref{refProp}\ref{Polya}, there exists a sequence $(a_\gamma)_{\gamma\in S}$ of complex numbers such that either $\sum_{\gamma\in S=E^+}a_\gamma z^\gamma$ or $Q(\zeta):=\sum_{\gamma\in S=E^-}a_{-\gamma} \zeta^\gamma$ has radius of convergence $1$ and has an analytic continuation on a neighborhood of some $e^{i\alpha}\in\T$. If we define $a_\gamma:=2^{-|\gamma|}$ for all $\gamma\in E\setdiff S$, then $f(z)=\sum_{\gamma\in E}a_\gamma z^{\gamma}$ converges in the annular region $A(\frac{1}{2},1)$ and has an analytic continuation on some neighborhood of $e^{i\alpha}\in\T$ for the case $S=E^+$, or, for the case $S=E^-$, the series  $f(z)=Q(\frac{1}{z})+\sum_{\gamma\in E^+}a_\gamma z^{\gamma}$ converges in the annular region $A(1,2)$ and has an analytic continuation on some neighborhood of $e^{-i\alpha}\in\T$. In both cases, the analytic continuation at the aforementioned element of $\T$ is forbidden by Lemma~\ref{zhdLem}. Therefore, any set of integers with zero harmonic density has zero density.
\end{proof}

Given $U\sub\T$, the subset of $M(U)$ consisting of discrete measures is denoted by $M_d(U)$. Replacing $M(\T)$ and $M(U)$ by $M_d(\T)$ and $M_d(U)$, respectively, in the definition of zero harmonic density in Section~\ref{PrelimSec} results in the notion of \emph{zero discrete harmonic density}. The two important classes of interpolation sets covered in \cite{gra13}, which are $I_0$ sets and Sidon sets are sets with zero discrete harmonic density and with zero harmonic density, respectively. An $I_0$ set is a Sidon set, but not conversely. This is because the union of two $I_0$ sets may not be $I_0$, but the union of two Sidon sets is Sidon \cite[Section~6.3]{gra13}. (The class of $I_0$ sets includes Hadamard sets, which are the subject of the sufficient condition for the Hadamard Gap Theorem.) By \cite[Corollary~10.2.5]{gra13}, zero discrete harmonic density implies zero harmonic density, so by Theorem~\ref{TheThm}, all sets mentioned have zero density in $\Z$. Perhaps a good further direction is to investigate the zero density of these sets in relation to their density in the Bohr topology on $\Z$ \cite[Chapter~8]{gra13}, which is the topology on $\Z$ if its dual $\T$ is taken to have the discrete instead of the usual compact topology.

\section*{Ethical approval} This is not applicable to research work in pure mathematics.

\section*{Funding} The author was supported by the Research Grants Management Office of De La Salle University, Taft Ave., Manila, Philippines, with grant no.: 02FR1TAY24-1TAY25.

\section*{Availability of data and materials} Studies in pure mathematics do not involve data sets, and hence a declaration on availability of data and materials is not applicable.

\end{document}